%% This is file `elsarticle-template-2-harv.tex',
%%
%% Copyright 2009 Elsevier Ltd
%%
%% This file is part of the 'Elsarticle Bundle'.
%% ---------------------------------------------
%%
%% It may be distributed under the conditions of the LaTeX Project Public
%% License, either version 1.2 of this license or (at your option) any
%% later version.  The latest version of this license is in
%%    http://www.latex-project.org/lppl.txt
%% and version 1.2 or later is part of all distributions of LaTeX
%% version 1999/12/01 or later.
%%
%% The list of all files belonging to the 'Elsarticle Bundle' is
%% given in the file `manifest.txt'.
%%
%% Template article for Elsevier's document class `elsarticle'
%% with harvard style bibliographic references
%%
%% $Id: elsarticle-template-2-harv.tex 155 2009-10-08 05:35:05Z rishi $
%% $URL: http://lenova.river-valley.com/svn/elsbst/trunk/elsarticle-template-2-harv.tex $
%%
%\documentclass[%preprint,
%authoryear,
%10pt]{elsarticle}

%% Use the option review to obtain double line spacing
\documentclass[final%authoryear,preprint,review%,12pt
]{elsarticle%-mine
}

%% Use the options 1p,twocolumn; 3p; 3p,twocolumn; 5p; or 5p,twocolumn
%% for a journal layout:
%% \documentclass[final,authoryear,1p,times]{elsarticle}
%% \documentclass[final,authoryear,1p,times,twocolumn]{elsarticle}
%%%% \documentclass[final,authoryear,3p,times]{elsarticle}
%% \documentclass[final,authoryear,3p,times,twocolumn]{elsarticle}
%% \documentclass[final,authoryear,5p,times]{elsarticle}
%% \documentclass[final,authoryear,5p,times,twocolumn]{elsarticle}

%% if you use PostScript figures in your article
%% use the graphics package for simple commands
%% \usepackage{graphics}
%% or use the graphicx package for more complicated commands
%% \usepackage{graphicx}
%% or use the epsfig package if you prefer to use the old commands
%% \usepackage{epsfig}

%% The amssymb package provides various useful mathematical symbols
%\usepackage{amssymb}
%% The amsthm package provides extended theorem environments
%% \usepackage{amsthm}

\RequirePackage[abspath,realmainfile]{currfile}

\usepackage{amsmath,amssymb,amsthm,amsfonts,amsbsy}
\usepackage{graphicx}
 
\usepackage{enumerate}
\usepackage%[final]
{showkeys}

\usepackage{datetime} 
\usepackage{xcolor}

\usepackage{mathrsfs}

\usepackage{hyperref}

%% The lineno packages adds line numbers. Start line numbering with
%% \begin{linenumbers}, end it with \end{linenumbers}. Or switch it on
%% for the whole article with \linenumbers after \end{frontmatter}.
%% \usepackage{lineno}

%% natbib.sty is loaded by default. However, natbib options can be
%% provided with \biboptions{...} command. Following options are
%% valid:

%%   round  -  round parentheses are used (default)
%%   square -  square brackets are used   [option]
%%   curly  -  curly braces are used      {option}
%%   angle  -  angle brackets are used    <option>
%%   semicolon  -  multiple citations separated by semi-colon (default)
%%   colon  - same as semicolon, an earlier confusion
%%   comma  -  separated by comma
%%   authoryear - selects author-year citations (default)
%%   numbers-  selects numerical citations
%%   super  -  numerical citations as superscripts
%%   sort   -  sorts multiple citations according to order in ref. list
%%   sort&compress   -  like sort, but also compresses numerical citations
%%   compress - compresses without sorting
%%   longnamesfirst  -  makes first citation full author list
%%
%% \biboptions{longnamesfirst,comma}

% \biboptions{}

\theoremstyle{plain} 
\newtheorem{theorem}{Theorem}%[section]
\newtheorem{corollary}%[theorem]
{Corollary}

\newtheorem*{theoremB*}{Theorem B}
\newtheorem*{theorem1.1ep*}{Theorem 1.$\boldsymbol{\vp}$}
\newtheorem*{theorem1.2ep*}{Theorem 2.$\boldsymbol{\vp}$}
\newtheorem{lemma}%[theorem]
{Lemma}

\newtheorem{proposition}%[theorem]
{Proposition}
\newtheorem*{propositionA*}{Proposition A}

\theoremstyle{definition} 
%[theorem]
{Definition}
\theoremstyle{definition} 
%[theorem]
{Example}
\newtheorem*{ex*}{Example}
\theoremstyle{remark} 

\theoremstyle{remark} 
\newtheorem{remark}%[theorem]
{Remark}
\newtheorem*{remark*}{Remark}

\providecommand{\url}[1]{#1}

\renewcommand{\le}{\leqslant}
\renewcommand{\ge}{\geqslant}

\renewcommand{\P}{\operatorname{\mathsf{P}}} 
\newcommand{\E}{\operatorname{\mathsf{E}}}

\newcommand{\ep}{\varepsilon}
\newcommand{\vp}{\varepsilon}

\newcommand{\la}{\lambda}

\newcommand{\Om}{\Omega}

\newcommand{\F}{\mathcal{F}}
\newcommand{\PP}{\mathcal{P}}

\journal{Statistics and Probability Letters}

%\numberwithin{equation}{section}

%IP\linespread{2}

\begin{document}

\begin{frontmatter}

%% Title, authors and addresses

%% use the tnoteref command within \title for footnotes;
%% use the tnotetext command for the associated footnote;
%% use the fnref command within \author or \address for footnotes;
%% use the fntext command for the associated footnote;
%% use the corref command within \author for corresponding author footnotes;
%% use the cortext command for the associated footnote;
%% use the ead command for the email address,
%% and the form \ead[url] for the home page:
%%
\title{Improved concentration bounds for sums of independent sub-exponential random variables}
%\tnoteref{label1}
%
%\tnotetext[label1]{Supported by NSA grant H98230-12-1-0237}
%% \author{Name\corref{cor1}\fnref{label2}}
%% \ead{email address}
%% \ead[url]{home page}
%% \fntext[label2]{}
%% \cortext[cor1]{}
%% \address{Address\fnref{label3}}
%% \fntext[label3]{}

%\title{}

%% use optional labels to link authors explicitly to addresses:
%% \author[label1,label2]{<author name>}
%% \address[label1]{<address>}
%% \address[label2]{<address>}

\author{Iosif Pinelis}

\address{Department of Mathematical Sciences\\
Michigan Technological University\\
Houghton, Michigan 49931, USA\\
E-mail: ipinelis@mtu.edu}

\begin{abstract}
Known Bernstein-type upper bounds on the tail probabilities for sums of independent zero-mean sub-exponential random variables are improved in several ways at once. The new upper bounds have a certain optimality property.   
\end{abstract}

\begin{keyword}
%% keywords here, in the form: keyword \sep keyword
concentration bounds \sep Bernstein-type bounds \sep sub-exponential random variables \sep sums of independent random variables

%% MSC codes here, in the form: \MSC code \sep code
%% or \MSC[2008] code \sep code (2000 is the default)
\MSC[2010] %!!!! edit \\ 
60E15 %,	62E15, 	62F03 
% 	62E15   	Exact distribution theory
\end{keyword}

%	62G05  	Nonparametric estimation
%	62G08  	Nonparametric regression and quantile regression

\end{frontmatter}

% \linenumbers

%\tableofcontents

%\url{https://mathoverflow.net/questions/404401/stochastic-ordering-of-absolute-multivariate-normal-random-variables/404404#404404}
%
%\url{https://mathoverflow.net/a/404404/36721}

%\section{Main result}\label{result}

\section{Known Bernstein-type upper bounds on the tail probabilities for sums of independent zero-mean sub-exponential random variables: Review}\label{review}

Such bounds are used in contemporary data science and, in particular, in analysis of random
matrices; see e.g.\ \cite{versh-HDP,versh-random-matr,chafai-etal}. 

Let us begin here with a few definitions and facts, mostly from the just mentioned sources. 

A function $f\colon[0,\infty)\to[0,\infty)$ is called an Orlicz function if $f$ is convex and 
increasing, with $f(0)=0$ and $\lim_{u\to\infty}f(u)=\infty$. 

For any (real-valued) random variable (r.v.) $X$ defined on a probability space $\PP=(\Om,\F,\P)$ and any Orlicz function $f$, let 
\begin{equation}\label{eq:orlicz}
	\|X\|_f:=\inf\{x>0\colon\E f(|X|/x)\le1\},   
\end{equation}
with the standard convention $\inf\emptyset:=\infty$; 
cf.\ \cite[Section~2.7.1]{versh-HDP}. One may note that, in view of the Fatou lemma,  
\begin{equation}\label{eq:=min}
	\|X\|_f=\min\{x>0\colon\E f(|X|/x)\le1\}   
\end{equation}
%unless $X=0$ almost surely (a.s.). 
provided that $0<\|X\|_f<\infty$. 

The function $\|\cdot\|_f$ is indeed a norm on the vector space of all r.v.'s $X$ defined on the probability space $\PP$ such that $\|X\|_f<\infty$. 

Consider the Orlicz %functions $\psi_1$ and $\psi_{11}$ defined by the formulas 
function $\psi_1$ defined by the formula
\begin{equation*}
	\psi_1(u):=e^u-1 %\quad\text{and}\quad\psi_{11}(u):=\psi_1(u)-u=e^u-1-u
\end{equation*}
for $u\in[0,\infty)$. 
%Obviously, $\psi_{11}\le\psi_1$ and hence 
%\begin{equation}\label{eq:11<1}
%	\|X\|_{\psi_{11}}\le\|X\|_{\psi_1}
%\end{equation}
%for any r.v.\ $X$. 

A r.v.\ $X$ is called \emph{sub-exponential} if $\|X\|_{\psi_1}<\infty$. This notion of sub-exponentiality (meaning that the tails of the r.v.\ $X$ are lighter than some exponential tails) should not be confused %IP07-19-22 
with the other common notion of sub-exponentiality, surveyed in \cite{goldie-kluppelberg} (meaning that the tails of the r.v.\ $X$ decay slower than all exponential tails). 

%It is easy to see that a r.v.\ $X$ is sub-exponential if and only if $\|X\|_{\psi_{11}}<\infty$. 

The following proposition is a restatement of \cite[Proposition~2.7.1]{versh-HDP}; cf.\ \cite[Section~5.2.4]{versh-random-matr}. 

\begin{propositionA*}
For any r.v.\ $X$, the %following four 
properties %: 
\begin{enumerate}[1)]
	\item $\P(|X|\ge x)\le2e^{-x/K_1}$ for all real $x>0$; 
	\item $\E|X|^p\le(K_2p)^p$ for all real $p\ge1$; 
	\item $\E e^{h|X|}\le\exp(K_3 h)$ for all $h\in[0,1/K_3]$; 
	\item $\|X\|_{\psi_1}\le K_4$  
\end{enumerate}
are equivalent to one another in the following sense: for any $i$ and $j$ in the set $\{1,2,3,4\}$, if Property i) holds for some real $K_i>0$, then Property j) holds for some real $K_j>0$ differing from $K_i$ by at most a universal positive real constant factor. 

Moreover, if $\E X=0$, then the property 
%there exists a positive real number $K_5$ that differs from (say) $K_1$ by at most a universal positive real factor such that for any r.v.\ $X$ with $\E X=0$ the following property is equivalent to each of properties (i)--(iv): 
\begin{enumerate}[5)]
	\item $\E e^{hX}\le\exp(K_5^2 h^2)$ for all real $h$ 
	such that $|h|\le1/K_5$  
\end{enumerate}
is equivalent, in the similar sense, to each of properties 1)--4). 
\end{propositionA*}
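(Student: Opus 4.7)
My plan is to prove a cycle of implications 1) $\Rightarrow$ 2) $\Rightarrow$ 3) $\Rightarrow$ 1), and separately to establish 3) $\Leftrightarrow$ 4) directly from the definition of the Orlicz norm. For the centered case, I will close the equivalence with 5) by deriving 5) from 2) and 1) from 5). The fact that each $K_j$ differs from $K_i$ by at most a universal factor will be visible at every step.

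For 1) $\Rightarrow$ 2), I use the layer-cake representation $\E|X|^p = p\int_0^\infty x^{p-1}\P(|X|\ge x)\,dx$, plug in the exponential tail bound, recognize a gamma integral, and bound the result using Stirling's inequality $\Gamma(p+1)\le e\,p^p$ to obtain $(K_2 p)^p$ with $K_2$ a universal multiple of $K_1$. For 2) $\Rightarrow$ 3), I expand $\E e^{h|X|}$ as a power series, apply the moment bound term by term, and use $p^p/p!\le e^p$ to sum the resulting geometric series, which converges for $h$ small compared to $1/K_2$. For 3) $\Rightarrow$ 1), the Markov--Chernoff inequality $\P(|X|\ge x)\le e^{-hx}\E e^{h|X|}\le e^{-hx+K_3 h}$, optimized over admissible $h$, yields the required tail bound; the factor $2$ in 1) is easily absorbed when switching between the small-$x$ and large-$x$ regimes. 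Finally, 3) $\Leftrightarrow$ 4) follows essentially from the fact that $\|X\|_{\psi_1}\le K_4$ means $\E e^{|X|/K_4}\le 2$, which compares directly with the statement of 3) evaluated at $h=1/K_4$, together with a Jensen/convexity step in the reverse direction.

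For the centered case, to derive 5) from 2) I expand $\E e^{hX}=1+\sum_{p\ge 2}h^p\E X^p/p!$, drop the linear term using $\E X=0$, apply the moment bound, and dominate the resulting series by a quantity of order $(K_2 h)^2$ for $|h|$ sufficiently small. Then $1+u\le e^u$ promotes this quadratic bound to the required exponential form. Conversely, for 5) $\Rightarrow$ 1), the Chernoff bound applied to both $X$ and $-X$, with $h$ optimized over $|h|\le1/K_5$, produces the two-sided exponential tail, splitting into the regimes $x\le c/K_5$ and $x>c/K_5$ that arise because the admissible range of $h$ is bounded.

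The main obstacle is bookkeeping rather than insight: verifying at each step that the new constant remains a universal multiple of the old one, and, for 5) $\Rightarrow$ 1), handling the two regimes carefully so that the factor $2$ on the right-hand side of 1) is preserved with a clean universal $K_1$. The individual estimates are standard, but ensuring that the constants are truly absolute, independent of $X$ and of $\PP$, requires consistent care throughout the cycle.
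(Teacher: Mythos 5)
Your plan is correct, and every individual step is the standard argument with universal constants: layer cake plus a gamma integral for 1)$\Rightarrow$2), termwise moment bounds and a geometric series for 2)$\Rightarrow$3) and 2)$\Rightarrow$5), Chernoff with a two-regime absorption of constants for 3)$\Rightarrow$1) and 5)$\Rightarrow$1), and direct comparison of $\E e^{|X|/K}$ with $2$ for 3)$\Leftrightarrow$4) (Jensen, via concavity of $t\mapsto t^\theta$, is what you need for 4)$\Rightarrow$3); the direction 3)$\Rightarrow$4) is just the choice $h=(\ln 2)/K_3$, so your phrase ``in the reverse direction'' is slightly off, and ``$x\le c/K_5$'' should read $x\le cK_5$). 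Note, though, that the paper does not prove Proposition A in full: it cites Vershynin and Chafa\"i et al.\ for the equivalence and only reviews, with explicit constants, the sub-chain 4)$\Rightarrow$1) (Markov applied to $\E e^{|X|/K_4}\le2$), 1)$\Rightarrow$2) (layer cake), and 2)$\Rightarrow$5) (series expansion), which is what it then streamlines into a direct 4)$\Rightarrow$5). Your organization is genuinely different in scope: the cycle 1)$\Rightarrow$2)$\Rightarrow$3)$\Rightarrow$1) together with 3)$\Leftrightarrow$4), 2)$\Rightarrow$5) and 5)$\Rightarrow$1) actually establishes the whole proposition, including property 3) and the converse 5)$\Rightarrow$1), which the paper never touches; on the implications that overlap, your arguments coincide with the paper's review. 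The only points needing a word of care in a full write-up are the interchange of expectation and summation (justified for $|h|$ small by $\E e^{|h||X|}<\infty$, which your moment bound gives) and the explicit bookkeeping you already flag, e.g.\ $K_1=2K_3$ in 3)$\Rightarrow$1) via the trivial bound $\P\le1\le 2e^{-x/K_1}$ for small $x$, and $K_1$ a small universal multiple of $K_5$ in 5)$\Rightarrow$1); none of this is a gap.
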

%Here we use the standard conventions $\frac x0:=\infty$ and  $\frac x\infty:=0$ for real $x>0$; $e^\infty:=\infty$; $e^{-\infty}:=0$. 

A similar statement, with explicit values of the universal positive real constant factors, was given as 
\cite[Theorem~1.1.5]{chafai-etal}.  

The most important implication in Proposition A is 4)$\implies$5).   
Using this implication, one almost immediately obtains the following Bernstein-type bound, stated as \cite[Theorem 2.8.1]{versh-HDP}; cf.\ \cite[Proposition~5.16]{versh-random-matr}:  

\begin{theoremB*}
Let $S_n:=X_1+\cdots+X_n$, where $X_1,\dots,X_n$ are independent zero-mean sub-exponential r.v.'s. Let 
\begin{equation*}
	B_n^2:=\sum_{i=1}^n\|X_i\|_{\psi_1}^2\quad\text{and}\quad
	M_n:=\max_{i=1}^n\|X_i\|_{\psi_1}. 
\end{equation*}
To avoid trivialities, suppose that $M_n\in(0,\infty)$ or, equivalently, $B_n^2\in(0,\infty)$. 
Then, for some universal positive real constant $c$ and all real $x\ge0$,
\begin{equation*}
	\P(|S_n|\ge x)\le2\exp\Big(-c\min\Big(\frac{x^2}{B_n^2},\frac x{M_n}\Big)\Big). 
\end{equation*}
\end{theoremB*}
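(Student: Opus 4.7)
The plan is to carry out the standard Cram\'er--Chernoff argument, using implication 4)$\implies$5) of Proposition A as the sole substantive input. First, since $\|X_i\|_{\psi_1}\le M_n$ and $\E X_i=0$, there is a universal constant $C_1$ such that, for each $i$, $\E e^{hX_i}\le\exp(C_1^2\|X_i\|_{\psi_1}^2 h^2)$ whenever $|h|\le 1/(C_1 M_n)$ (so that in particular $|h|\le 1/(C_1\|X_i\|_{\psi_1})$). By independence, for any such $h$,
$$\E e^{hS_n}=\prod_{i=1}^n\E e^{hX_i}\le\exp\bigl(C_1^2 B_n^2 h^2\bigr).$$

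Next, combine this with the exponential Markov inequality. For $h\in(0,1/(C_1 M_n)]$,
$$\P(S_n\ge x)\le e^{-hx}\,\E e^{hS_n}\le\exp\bigl(-hx+C_1^2 B_n^2 h^2\bigr),$$
and we optimize the exponent over $h$. The unconstrained minimizer is $h^\ast=x/(2C_1^2 B_n^2)$, and the admissible upper endpoint is $h_0:=1/(C_1 M_n)$.

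Distinguish two cases. If $h^\ast\le h_0$, equivalently $x\le 2C_1 B_n^2/M_n$, then setting $h=h^\ast$ yields the Gaussian-like bound $\exp\bigl(-x^2/(4C_1^2 B_n^2)\bigr)$. If instead $h^\ast>h_0$, then setting $h=h_0$ gives an exponent $-h_0 x+C_1^2 B_n^2 h_0^2$; using $C_1^2 B_n^2 h_0\le C_1^2 B_n^2 h^\ast=x/2$, this exponent is at most $-h_0 x/2=-x/(2C_1 M_n)$, yielding the Poisson-like bound $\exp\bigl(-x/(2C_1 M_n)\bigr)$. In both cases the exponent is bounded by $-c\min\bigl(x^2/B_n^2,\,x/M_n\bigr)$ for a suitable universal $c>0$.

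Finally, apply the same reasoning to the r.v.'s $-X_1,\dots,-X_n$ (which are zero-mean and have the same $\psi_1$-norms) to get an identical bound on $\P(-S_n\ge x)$, and then union-bound to produce the factor $2$. There is no genuine obstacle: the only step requiring a little care is the two-case optimization, where one must respect the admissibility constraint $h\le 1/(C_1 M_n)$ and absorb the quadratic contribution into the linear one in the boundary regime; everything else is mechanical once Proposition A is invoked.
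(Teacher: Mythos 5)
Your proposal is correct and is essentially the argument the paper has in mind: it says Theorem B follows ``almost immediately'' from the implication 4)$\implies$5) of Proposition A, and your Chernoff bound with the constrained optimization over $h\in(0,1/(C_1M_n)]$ and the two-case analysis is exactly the computation the paper itself carries out (with explicit constants) in its proof of Theorem~\ref{th:2}. The final step of applying the one-sided bound to $-X_1,\dots,-X_n$ and union-bounding to get the factor $2$ likewise matches the paper's remark following Theorem~\ref{th:2}.
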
 

A similar statement, with an explicit value of the constant $c$, was given as 
\cite[Theorem~1.2.7]{chafai-etal}. 

%We will obtain an improved version of Theorem B, without unspecified constants. 

The implication 4)$\implies$5) was obtained in \cite{versh-random-matr,versh-HDP} as a corollary of the implications 4)$\implies$1), 1)$\implies$2), 2)$\implies$5), which are proved as follows: 

4)$\implies$1): Assuming 4), by \eqref{eq:=min} with $f=\psi_1$ and Markov's inequality, for real $x>0$ we get 
\begin{equation*}
	\P(|X|\ge x)\le e^{-x/K_4}\E e^{|X|/K_4}\le2e^{-x/K_4},
\end{equation*}
so that 1) holds with $K_1=K_4$. 

1)$\implies$2): 
Assuming 1), for real $p\ge1$ we get 
\begin{equation*}
	\E|X|^p=\int_0^\infty px^{p-1}\P(|X|\ge x)\,dx
	\le2\int_0^\infty px^{p-1}e^{-x/K_4}\,dx
	=2p!K_4^p\le(2pK_4)^p,
\end{equation*}
so that 1) holds with $K_2=2K_4$. 

2)$\implies$5): Assuming $\E X=0$ and 2), and using the inequality $p!\ge p^p/e^p$ for integers $p\ge1$, for all real $h$ close enough to $0$ we get 
\begin{multline}
	\E e^{hX}\le1+\sum_{p=2}^\infty\frac{|h|^p\E|X|^p}{p!}
	\le1+\sum_{p=2}^\infty |h|^pK_2^pe^p
	=1+\frac{e^2K_2^2h^2}{1-eK_2|h|} \\ 
	=1+\frac{4e^2K_4^2h^2}{1-2eK_4|h|}
%	\le\exp\frac{e^2K_2^2h^2}{1-eK_2|h|}
\le\exp\frac{4e^2K_4^2h^2}{1-2eK_4|h|}, \label{eq:5}
\end{multline}
so that 5) holds with $K_5=2eK_2=4eK_4$. 

\medskip\hrule\medskip

This proof of the implication 4)$\implies$5) can be streamlined, with an improved result. This can be done as follows: 

Property 4) implies 
\begin{equation}\label{eq:E|X|^p}
	2\ge\E e^{|X|/K_4}\ge\frac{\E|X|^p/K_4^p}{p!}
\end{equation}
for integers $p\ge1$, whence 
\begin{multline}\label{eq:Ee^hX}
	\E e^{hX}\le1+\sum_{p=2}^\infty\frac{|h|^p\E|X|^p}{p!}
	\le1+\sum_{p=2}^\infty 2|h|^pK_4^p
	=1+\frac{2K_4^2 h^2}{1-K_4|h|} \\ 
	\le\exp\frac{2K_4^2 h^2}{1-K_4|h|}; 
\end{multline}
cf.\ \eqref{eq:5}. Here it was beneficial to eliminate Property~1 from the chain \break  4)$\implies$1)$\implies$2)$\implies$5 of implications, used previously to prove the implication 4)$\implies$5). 

%In the latter derivation of Property 5) for sub-exponential zero-mean r.v.'s $X$, we used the bound on exponential moments of the r.v. $|X|$ given by Property 4) to first bound the absolute moments of $X$ as in \eqref{eq:E|X|^p}, and then we used the Maclaurin series for the exponential function to bound exponential moments of $X$, as in \eqref{eq:Ee^hX}. 

\section{%A further streamlined proof of e
Explicit, improved bounds: using a better Orlicz function}\label{psi11}

In the streamlined derivation at the end of Section~\ref{review} of Property 5) for sub-exponential zero-mean r.v.'s $X$, we used the bound on exponential moments of the r.v.\ $|X|$ given by Property 4) to first bound the absolute moments of $X$ as in \eqref{eq:E|X|^p}, and then we used the Maclaurin series for the exponential function to bound exponential moments of $X$, as in \eqref{eq:Ee^hX}. 

This latter derivation of Property 5) can be further streamlined, with a substantially improved result. This is attained by 
\begin{itemize}
	\item using a better Orlicz function and, at the same time,  
	\item going directly from exponential moments of the r.v.\ $|X|$ to exponential moments of the r.v.\ $X$, without bounding absolute moments of $X$ or using any series (as was done in  Section~\ref{review}). 
\end{itemize}

The better Orlicz function, which will be denoted here by $\psi_{11}$, is defined by the formula 
\begin{equation*}
	\psi_{11}(u):=\psi_1(u)-u=e^u-1-u
\end{equation*}
for $u\in[0,\infty)$. 
Obviously, $\psi_{11}\le\psi_1$ and hence 
\begin{equation}\label{eq:11<1}
	\|X\|_{\psi_{11}}\le\|X\|_{\psi_1}
\end{equation}
for any r.v.\ $X$. 

Also, it is easy to see that a r.v.\ $X$ is sub-exponential if and only if $\|X\|_{\psi_{11}}<\infty$. 
%IP07-19-22
The ``only if'' part of this statement follows immediately from inequality \eqref{eq:11<1}, whereas inequality \eqref{eq:1<11} (stated and proved later in this paper) is an exact quantitative refinement of the ``if'' part. 

%The result is % p. 5 of the notes 
%
%Consider the Orlicz %functions $\psi_1$ and $\psi_{11}$ defined by the formulas 
%functions $\psi_1$ defined by the formula
%\begin{equation*}
%	\psi_1(u):=e^u-1 %\quad\text{and}\quad\psi_{11}(u):=\psi_1(u)-u=e^u-1-u
%\end{equation*}
%for $u\in[0,\infty)$. 
%
%Obviously, $\psi_{11}\le\psi_1$ and hence 
%\begin{equation}\label{eq:11<1}
%	\|X\|_{\psi_{11}}\le\|X\|_{\psi_1}
%\end{equation}
%for any r.v.\ $X$. 
%
%It is easy to see that a r.v.\ $X$ is sub-exponential if and only if $\|X\|_{\psi_{11}}<\infty$. 
%
%We will obtain an improved version of Theorem B, without unspecified constants. 

Now we can state 

\begin{theorem}\label{th:1} 
If $\E X=0$, then 
\begin{equation}\label{eq:th1}
	\E e^{hX}\le1+h^2\|X\|_{\psi_{11}}^2
	\le\exp\{h^2\|X\|_{\psi_{11}}^2\}\le\exp\{h^2\|X\|_{\psi_1}^2\}
\end{equation}
for any real $h$ such that 
\begin{equation*}
	|h|\,\|X\|_{\psi_{11}}\le1 
\end{equation*}
and hence for any real $h$ such that 
\begin{equation*}
	|h|\,\|X\|_{\psi_1}\le1. 
\end{equation*}
\end{theorem}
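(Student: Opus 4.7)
The plan is to combine the mean-zero hypothesis with the series expansion of $\psi_{11}$ to bound $\E e^{hX}$ directly in terms of $\E\psi_{11}(|X|/K)$, where $K:=\|X\|_{\psi_{11}}$. The case $K=0$ is trivial (then $X=0$ a.s.), so assume $K\in(0,\infty)$; the case $K=\infty$ is vacuous under the hypothesis $|h|K\le1$.

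First, since $\E X=0$, write $\E e^{hX}=1+\E(e^{hX}-1-hX)$. The key elementary observation is that, comparing Maclaurin series term by term,
\begin{equation*}
	e^y-1-y=\sum_{k=2}^\infty\frac{y^k}{k!}\le\sum_{k=2}^\infty\frac{|y|^k}{k!}=\psi_{11}(|y|)
\end{equation*}
for every real $y$. Applying this with $y=hX$ gives $\E e^{hX}\le1+\E\psi_{11}(|h|\,|X|)$.

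Second, the same series expansion yields a scaling inequality: for any $t\in[0,1]$ and $u\ge0$, since $t^k\le t^2$ for integers $k\ge2$,
\begin{equation*}
	\psi_{11}(tu)=\sum_{k=2}^\infty\frac{t^ku^k}{k!}\le t^2\sum_{k=2}^\infty\frac{u^k}{k!}=t^2\psi_{11}(u).
\end{equation*}
Apply this with $t=|h|K\in[0,1]$ and $u=|X|/K$, so that $tu=|h|\,|X|$. Taking expectations and using the defining bound $\E\psi_{11}(|X|/K)\le1$ from \eqref{eq:=min} with $f=\psi_{11}$, we get
\begin{equation*}
	\E\psi_{11}(|h|\,|X|)\le(|h|K)^2\,\E\psi_{11}(|X|/K)\le h^2\|X\|_{\psi_{11}}^2.
\end{equation*}
Combining with the previous step yields the first inequality in \eqref{eq:th1}. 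The second inequality in \eqref{eq:th1} is the elementary $1+x\le e^x$, and the third is immediate from \eqref{eq:11<1}. The final ``hence'' clause also follows from \eqref{eq:11<1}.

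The main obstacle is essentially cosmetic: one must notice that going directly from $|X|$ to $X$ via $e^y-1-y\le\psi_{11}(|y|)$ avoids the need to pass through absolute-moment bounds (as in \eqref{eq:E|X|^p}–\eqref{eq:Ee^hX}), and that $\psi_{11}$, unlike $\psi_1$, enjoys the crucial quadratic scaling $\psi_{11}(tu)\le t^2\psi_{11}(u)$ for $t\in[0,1]$. This scaling property is precisely what makes $\psi_{11}$ ``better'' than $\psi_1$ for the purpose of producing the clean quadratic bound $h^2\|X\|_{\psi_{11}}^2$ with no extra denominator $1-K|h|$.
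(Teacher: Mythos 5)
Your proof is correct and takes essentially the same route as the paper: your two term-by-term series inequalities ($e^y-1-y\le\psi_{11}(|y|)$ and $\psi_{11}(tu)\le t^2\psi_{11}(u)$ for $t\in[0,1]$, $u\ge0$) combine into exactly the paper's Lemma~\ref{lem:1}, $e^{\la x}-1-\la x\le\la^2\psi_{11}(|x|)$ for $|\la|\le1$, which the paper proves instead by a convexity argument ($d''\ge0$ with $d(0)=d'(0)=0$). The remaining steps --- using $\E X=0$, taking expectations, invoking \eqref{eq:=min} with $f=\psi_{11}$, and deducing the second and third inequalities from $1+x\le e^x$ and \eqref{eq:11<1} --- coincide with the paper's proof.
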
 
Cf.\ \eqref{eq:Ee^hX} and Property 4): the factor $\frac{2}{1-K_4|h|}\ge2$ has now been eliminated. 

The proof of Theorem~\ref{th:1} does not involve bounding absolute moments of $X$ or using any series; instead, it  
is based on 
\begin{lemma}\label{lem:1}
For all real $\la $ with $|\la |\le1$ and all real $x$
\begin{equation*}
	e^{\la x}-1-\la x\le\la ^2 \psi_{11}(|x|). %(e^{|x|}-1-|x|). 
\end{equation*}
\end{lemma}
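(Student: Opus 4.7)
\medskip

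\noindent\textbf{Proof plan for Lemma~\ref{lem:1}.}
The strategy is an entirely elementary term-by-term Maclaurin comparison: expand both sides as power series in $x$ (resp.~$\lambda x$), and show that the inequality holds coefficient-by-coefficient.

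First I would write the left-hand side as
\begin{equation*}
	e^{\la x}-1-\la x=\sum_{k=2}^\infty\frac{(\la x)^k}{k!}
	=\sum_{k=2}^\infty\frac{\la^k x^k}{k!},
\end{equation*}
and the right-hand side as
\begin{equation*}
	\la^2\,\psi_{11}(|x|)=\la^2\sum_{k=2}^\infty\frac{|x|^k}{k!}
	=\sum_{k=2}^\infty\frac{\la^2\,|x|^k}{k!}.
\end{equation*}
Both series converge absolutely for every real $x$, so it suffices to prove the term-by-term bound
\begin{equation*}
	\la^k x^k\le\la^2\,|x|^k\qquad\text{for every integer }k\ge2.
\end{equation*}

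The next step is just two trivial estimates. Since $k\ge2$, we have $\la^k x^k\le|\la|^k|x|^k$ (this is an equality when $k$ is even, and an inequality when $k$ is odd, regardless of the sign of $x$). Then, since $|\la|\le1$ and $k\ge2$, we have $|\la|^k\le|\la|^2=\la^2$; multiplying by the nonnegative factor $|x|^k$ yields $|\la|^k|x|^k\le\la^2\,|x|^k$. Chaining these two bounds finishes the term-by-term comparison, and summing over $k\ge2$ completes the proof of the lemma.

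There is no genuine obstacle here: the entire argument is driven by the observation that $|\la|^k\le\la^2$ for $|\la|\le1$ and $k\ge2$, which is exactly what lets the single factor $\la^2$ on the right dominate each power $\la^k$ on the left. The only point worth flagging is that the comparison must be made with $|x|^k$ on the right (rather than $x^k$), which is precisely why the statement of the lemma uses $\psi_{11}(|x|)$ and not $\psi_{11}(x)$; this is also what allows a single argument to cover both signs of $x$ at once, with no separate case analysis.
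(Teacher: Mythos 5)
Your proof is correct, but it takes a genuinely different route from the paper. The paper proves the lemma by a convexity argument: it sets $d(x):=\la^2\psi_{11}(|x|)-(e^{\la x}-1-\la x)$, computes $d''(x)=\la^2(e^{|x|}-e^{\la x})\ge0$ (using $\la x\le|\la|\,|x|\le|x|$), and concludes from $d(0)=d'(0)=0$ and convexity that $d\ge0$. You instead expand both sides in their Maclaurin series and compare coefficients, reducing everything to $\la^k x^k\le|\la|^k|x|^k\le\la^2|x|^k$ for $k\ge2$, which is valid and loses nothing since the comparison is exact term by term. Your argument is arguably more elementary (no differentiation, only the two trivial estimates you name), while the paper's is shorter and makes the structural reason visible at a glance (the difference is a convex function flat at the origin). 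One small contextual remark: the paper advertises that its derivation of Theorem~\ref{th:1} avoids series; that claim concerns avoiding the lossy moment-by-moment bounding of $\E|X|^p$ as in \eqref{eq:Ee^hX}, not the pointwise Lemma~\ref{lem:1} itself, so using a series inside the lemma's proof, as you do, does not reintroduce any of the slack that the paper was trying to eliminate.
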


\begin{proof}
Let 
\begin{equation*}
	d(x):=d_\la (x):=\la ^2 \psi_{11}(|x|)-(e^{\la x}-1-\la x). 
\end{equation*}
Then $d''(x)=\la ^2(e^{|x|}-e^{\la x})\ge0$, since $\la x\le|\la |\,|x|\le|x|$, given the condition $|\la |\le1$. So, the function $d$ is convex. Also, 
$d(0)=0$ and $d'(0)=0$. Thus, $d\ge0$, which completes the proof of Lemma~\ref{lem:1}. 
\end{proof}

\begin{proof}[Proof of Theorem~\ref{th:1}]
Without loss of generality (wlog), $\|X\|_{\psi_{11}}\in(0,\infty)$. 
By Lemma~\ref{lem:1} with $\la$ and $x$ replaced by $h\|X\|_{\psi_{11}}$ and $X/\|X\|_{\psi_{11}}$, respectively, we have 
\begin{equation*}
	e^{hX}\le1+hX+h^2\|X\|_{\psi_{11}}^2 \psi_{11}(|X|/\|X\|_{\psi_{11}}).  
\end{equation*}
Taking now the expected values of both sides of this inequality, and recalling the condition $\E X=0$ and formula \eqref{eq:=min}, we obtain the first inequality in \eqref{eq:th1}. The second inequality in \eqref{eq:th1} is trivial, and the third inequality there follows immediately from \eqref{eq:11<1}. 
\end{proof}

As a corollary of Theorem~\ref{th:1}, we have the following improvement of \break 
Theorem~B, without non-specified constants and with the smaller quantities $B_{n,1}^2$ and $M_{n,1}$ in place of $B_n^2$ and $M_n$, respectively: 

\begin{theorem}\label{th:2} 
Let $S_n:=X_1+\cdots+X_n$, where $X_1,\dots,X_n$ are independent zero-mean sub-exponential r.v.'s. Let 
\begin{equation*}
	B_{n,1}^2:=\sum_{i=1}^n\|X_i\|_{\psi_{11}}^2\quad\text{and}\quad
	M_{n,1}:=\max_{i=1}^n\|X_i\|_{\psi_{11}}. 
\end{equation*}
To avoid trivialities, suppose that $M_{n,1}\in(0,\infty)$ or, equivalently, $B_{n,1}^2\in(0,\infty)$. 
%To avoid trivialities, suppose that $M_{n,1}\ne0$ or, equivalently, $B_{n,1}^2\ne0$. 
Then for all real $x\ge0$ 
\begin{align}
	\P(S_n\ge x)
	&\le\left\{
\begin{alignedat}{2}
	&\exp-\frac{x^2}{4B_{n,1}^2} &&\text{\quad if\quad }x\le\frac{2B_{n,1}^2}{M_{n,1}}, \\ 
	&\exp-\Big(\frac x{M_{n,1}}-\frac{B_{n,1}^2}{M_{n,1}^2}\Big) 
	 &&\text{\quad if\quad }x\ge\frac{2B_{n,1}^2}{M_{n,1}}
\end{alignedat}
	\right. \label{eq:th2-1} \\ 
	&\le\exp\Big(-\min\Big(\frac{x^2}{4B_{n,1}^2},
	\frac x{2M_{n,1}}\Big)\Big). \label{eq:th2-2} 
\end{align}	
In \eqref{eq:th2-1}--\eqref{eq:th2-2}, $B_{n,1}^2$ and $M_{n,1}$ can be replaced by the larger quantities $B_n^2$ and $M_n$, respectively (which will result in upper bounds on $\P(S_n\ge x)$ worse than those in \eqref{eq:th2-1}--\eqref{eq:th2-2}). 
\end{theorem}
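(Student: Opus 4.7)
The plan is to apply the standard Chernoff--Cram\'er method, using the moment generating function estimate from Theorem~\ref{th:1} as the key input. By Theorem~\ref{th:1}, for each $i$ and every real $h$ with $|h|\,M_{n,1}\le1$ (so that in particular $|h|\,\|X_i\|_{\psi_{11}}\le1$ for all $i$), one has $\E e^{hX_i}\le\exp\{h^2\|X_i\|_{\psi_{11}}^2\}$. Independence then gives
\begin{equation*}
\E e^{hS_n}=\prod_{i=1}^n\E e^{hX_i}\le\exp\{h^2 B_{n,1}^2\},
\end{equation*}
and Markov's inequality yields $\P(S_n\ge x)\le\exp\{h^2 B_{n,1}^2-hx\}$ for all such $h\ge0$.

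Next, I would optimize $h^2 B_{n,1}^2-hx$ in $h$ over the admissible interval $[0,1/M_{n,1}]$. The unconstrained minimizer is $h^*=x/(2B_{n,1}^2)$, which lies in $[0,1/M_{n,1}]$ precisely when $x\le 2B_{n,1}^2/M_{n,1}$. In that regime one plugs in $h^*$ to obtain the exponent $-x^2/(4B_{n,1}^2)$, which is the first branch of \eqref{eq:th2-1}. In the complementary regime $x\ge 2B_{n,1}^2/M_{n,1}$, the minimizer on $[0,1/M_{n,1}]$ is the endpoint $h=1/M_{n,1}$, giving the exponent $B_{n,1}^2/M_{n,1}^2-x/M_{n,1}$, which is the second branch of \eqref{eq:th2-1}.

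To pass to \eqref{eq:th2-2}, I would argue in each regime that the bound in \eqref{eq:th2-1} is dominated by $\exp\{-\min(x^2/(4B_{n,1}^2),x/(2M_{n,1}))\}$: in the first regime, $x\le 2B_{n,1}^2/M_{n,1}$ gives $x^2/(4B_{n,1}^2)\le x/(2M_{n,1})$, so the min is $x^2/(4B_{n,1}^2)$; in the second regime, $x\ge 2B_{n,1}^2/M_{n,1}$ gives $B_{n,1}^2/M_{n,1}^2\le x/(2M_{n,1})$, so the second-branch exponent satisfies $-x/M_{n,1}+B_{n,1}^2/M_{n,1}^2\le -x/(2M_{n,1})$, while the same inequality $x\ge 2B_{n,1}^2/M_{n,1}$ shows $x/(2M_{n,1})\le x^2/(4B_{n,1}^2)$.

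The very last assertion of Theorem~\ref{th:2}, about replacing $B_{n,1}^2,M_{n,1}$ by $B_n^2,M_n$, is an immediate consequence of the pointwise inequality \eqref{eq:11<1}, which yields $B_{n,1}^2\le B_n^2$ and $M_{n,1}\le M_n$, together with the monotonicity of the right-hand sides in these quantities. I do not anticipate a serious obstacle: the only mildly delicate point is verifying that the endpoint $h=1/M_{n,1}$ (rather than $h^*$) is genuinely optimal on the constrained interval in the second regime, which follows because the quadratic $h\mapsto h^2 B_{n,1}^2-hx$ is decreasing on $[0,h^*]$ and $1/M_{n,1}\le h^*$ there.
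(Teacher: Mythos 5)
Your proposal is correct and follows essentially the same route as the paper: Markov's inequality plus independence, the moment generating function bound of Theorem~\ref{th:1} for $h\in[0,1/M_{n,1}]$, and minimization of $h^2B_{n,1}^2-hx$ over that interval, with the two regimes separated at $x=2B_{n,1}^2/M_{n,1}$. The only divergence is the final replacement claim: the paper simply reruns the same Chernoff argument using the $\psi_1$-part of Theorem~\ref{th:1}, whereas you invoke \eqref{eq:11<1} and monotonicity; your route is also fine, but note that branch-by-branch monotonicity is not quite enough since the threshold $2B^2/M$ moves when $(B_{n,1}^2,M_{n,1})$ is replaced by $(B_n^2,M_n)$ --- the clean justification is that the piecewise bound in \eqref{eq:th2-1} equals $\exp\big(\min_{0\le h\le 1/M}(h^2B^2-hx)\big)$, which is nondecreasing in both $B^2$ and $M$ (larger $B^2$ raises the integrand, larger $M$ shrinks the feasible interval), which your own derivation of the two branches already makes implicit.
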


Theorem~\ref{th:2} provides upper bounds on the right-tail probability $\P(S_n\ge x)$. Of course, replacing each $X_i$ by $-X_i$, we get the same upper bounds on the left-tail probability $\P(S_n\le -x)$ for $x\ge0$; then, clearly, we get the doubled upper bounds on the two-tail probability $\P(|S_n|\ge x)$ -- cf.\ Theorem B. 

\begin{proof}[Proof of Theorem~\ref{th:2}] 
Using Markov's inequality, the independence of the $X_i$'s, and 
Theorem~\ref{th:1}, for all real $x\ge0$ and all $h\in[0,1/M_{n,1}]$
we have 
\begin{equation*}
	\P(S_n\ge x)\le e^{-hx}\E e^{hS_n}=e^{-hx}\prod_{i=1}^n\E e^{hX_i}
	\le\exp\big(-hx+h^2B_{n,1}^2\big).  	 
\end{equation*}
Minimizing the latter expression in $h\in[0,1/M_{n,1}]$, we get \eqref{eq:th2-1}. Inequality  \eqref{eq:th2-2} is trivial. 

Using the part of Theorem~\ref{th:1} concerning $\|X\|_{\psi_1}$ rather than $\|X\|_{\psi_{11}}$, we see that indeed, in \eqref{eq:th2-1}--\eqref{eq:th2-2}, $B_{n,1}^2$ and $M_{n,1}$ can be replaced by the larger  quantities $B_n^2$ and $M_n$, respectively. 
\end{proof}

%IP07-19-22 moved the remark
%\begin{remark}\label{rem:11 vs 1}
%The gain provided by using the norm $\|\cdot\|_{\psi_{11}}$ instead of $\|\cdot\|_{\psi_1}$ can be quite substantial. For instance, if $X_1,\dots,X_n$ are independent r.v.'s with the same centered uniform distribution, then $B_n^2/B_{n,1}^2=2.215\dots$. Since $B_n^2$ and $B_{n,1}^2$ appear as factors in the exponents, the $\psi_1$-based bound 
%$\exp-\frac{x^2}{4B_2^2}$ on $\P(S_n\ge x)$ and the corresponding improved, $\psi_{1,1}$-based bound $\exp-\frac{x^2}{4B_{n,1}^2}$ (as in \eqref{eq:th2-1})
%%with $B_n^2$ and $B_{n,1}^2$ 
%can be as different as $0.1$ and $0.1^{2.215\dots}=0.006091\dots$, respectively. 
%
%The greatest possible value of the improvement ratio $B_n^2/B_{n,1}^2$ over all independent zero-mean r.v.'s $X_1,\dots,X_n$ is $2.734\dots$, attained when $\P(X_i=1)=\P(X_i=-1)=1/2$ for all $i$, and then the value  $0.1^{2.215\dots}=0.006091\dots$ in the above paragraph is replaced by the yet smaller value $0.1^{2.734\dots}=0.001843\dots$. 
%\end{remark}

%The statement in the latter paragraph is based on 
\begin{proposition}\label{prop:gain}
(Cf.\ \eqref{eq:11<1}.) For any r.v.\ $X$, 
\begin{equation}\label{eq:1<11}
	\|X\|_{\psi_1}\le C\|X\|_{\psi_{11}},
\end{equation}
where 
\begin{equation}\label{eq:C}
	C:=-\frac{W_{-1}(-e^{-2})+2}{\ln2}=1.653\ldots=\sqrt{2.734\dots}
\end{equation}
and $W_{-1}$ is the $(-1)$th branch of the Lambert $W$ function \cite{knuth96}, so that for all real $z$ and $w$ one has $w=W_{-1}(z)\in(-\infty,-1)\iff z=we^w\in(-e^{-1},0)$. 

Inequality \eqref{eq:1<11} turns into the equality if the r.v.\ $|X|$ is a constant. 
\end{proposition}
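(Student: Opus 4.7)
The plan is to reduce the norm inequality to a pointwise inequality between $\psi_1(\cdot/C)$ and $\psi_{11}$, then pin down the right constants by matching value and derivative at the extremal point $t_0$ where the equality case lives. By the definition \eqref{eq:=min}, in the only nontrivial case $\|X\|_{\psi_{11}}\in(0,\infty)$ the r.v.\ $Y:=|X|/\|X\|_{\psi_{11}}$ satisfies $\E\psi_{11}(Y)\le 1$, and it suffices to show $\E\psi_1(Y/C)\le 1$. For this I will exhibit constants $a,b\ge 0$ with $a+b=1$ satisfying
\begin{equation*}
	\psi_1(y/C)\le a\,\psi_{11}(y)+b\qquad\text{for all }y\ge 0;
\end{equation*}
taking expectations and invoking \eqref{eq:=min} once more then yields $\|X\|_{\psi_1}\le C\|X\|_{\psi_{11}}$.

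The natural anchor point is the unique positive root $t_0$ of $e^{t}-t=2$, which via the substitution $v=-(t_0+2)$ becomes $ve^v=-e^{-2}$ with $v<-1$; this gives $t_0=-W_{-1}(-e^{-2})-2$ and $C=t_0/\ln 2$. At $y=t_0$ one has $\psi_{11}(t_0)=1$ and $\psi_1(t_0/C)=e^{\ln 2}-1=1$, so any choice with $a+b=1$ produces equality at $t_0$; to get tangency I would impose matching first derivatives there, which, using $\psi_{11}'(t_0)=e^{t_0}-1=1+t_0$ and $\psi_1'(t_0/C)/C=2/C$, forces $a:=2/[C(1+t_0)]$ and $b:=1-a$.

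It then remains to verify $g(y):=\psi_1(y/C)-a\psi_{11}(y)-b\le 0$ on $[0,\infty)$, which I would establish by strict concavity: a direct computation gives
\begin{equation*}
	g''(y)=e^y\!\left[\tfrac{1}{C^2}\,e^{-(1-1/C)y}-a\right],
\end{equation*}
a strictly decreasing function of $y\ge 0$. The main (but very mild) obstacle is checking that the bracket is already negative at $y=0$, i.e., $2C>1+t_0$; both this inequality and $a\le 1$ (so that $b\ge 0$) follow cleanly from the single elementary fact $t_0>1$, which is immediate from $\psi_{11}(1)=e-2<1$ together with monotonicity of $\psi_{11}$. Granted concavity, the critical point $t_0$ is the unique global maximum of $g$ with $g(t_0)=0$, giving the pointwise inequality and hence the proposition.

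For sharpness, if $|X|=c>0$ a.s.\ then solving the defining equations $\psi_1(c/\|X\|_{\psi_1})=1$ and $\psi_{11}(c/\|X\|_{\psi_{11}})=1$ gives $\|X\|_{\psi_1}=c/\ln 2$ and $\|X\|_{\psi_{11}}=c/t_0$, so their ratio equals $t_0/\ln 2=C$, confirming that the constant $C$ is tight.
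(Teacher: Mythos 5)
Your proof is correct, and it takes a genuinely different route from the paper's. You normalize $\|X\|_{\psi_{11}}$ and establish the pointwise majorization $\psi_1(y/C)\le a\,\psi_{11}(y)+(1-a)$ for all $y\ge0$, with $a=2/[C(1+t_0)]$ fixed by tangency at $t_0=-W_{-1}(-e^{-2})-2$ (where $\psi_{11}(t_0)=1=\psi_1(t_0/C)$), the inequality being proved by concavity of the difference ($g''$ has the sign of $\tfrac1{C^2}e^{-(1-1/C)y}-a$, which is decreasing and already negative at $y=0$ since $1+t_0<2C$, a consequence of $t_0>1$); taking expectations and using \eqref{eq:=min} then gives $\E\psi_1\big(|X|/(C\|X\|_{\psi_{11}})\big)\le1$, hence \eqref{eq:1<11}. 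The paper argues in the opposite normalization: it sets $\|X\|_{\psi_1}=1$, so $\E Y=2$ for $Y=e^{|X|}$, writes $\E\psi_{11}(|X|/x)=\E g_t(Y)$ with the convex function $g_t(y)=y^t-1-t\ln y$, and applies Jensen's inequality to get $\psi_{11}(t\ln2)\le1$, i.e.\ $t\ln2\le\psi_{11}^{-1}(1)$, which is exactly \eqref{eq:1<11}, with equality for constant $|X|$ coming from equality in Jensen. The paper's Jensen argument is shorter and needs no calculus beyond convexity of $g_t$; your tangent-line argument is more computational (one must check $t_0>1$, $a\le1$, and the sign of $g''$), but it makes the provenance of the sharp constant transparent, produces a reusable pointwise inequality between the two Orlicz functions, and verifies the equality case by direct computation of both norms for constant $|X|$. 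Both routes yield the same sharp constant $C=t_0/\ln2$.
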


\begin{proof}[Proof of Proposition~\ref{prop:gain}]
By rescaling, wlog $\|X\|_{\psi_1}=1$, that is, $\E Y=2$, where $Y:=e^{|X|}$. Letting then for brevity $x:=\|X\|_{\psi_{11}}$, by \eqref{eq:11<1} and the assumption $\|X\|_{\psi_1}=1$ we get $x\in(0,1]$.
%Wlog, $x\ne0$ -- otherwise, $X=0$ a.s.\ and inequality \eqref{eq:1<11} becomes trivial. 

Also, by \eqref{eq:=min},
\begin{equation*}
	1=\E\psi_{11}(|X|/x)=\E g_t(Y),
\end{equation*}
where $g_t(y):=y^t-1-t\ln y$ for real $y>0$ and $t:=1/x\ge1$. 
Since the function $g_t$ is convex and $\E Y=2$, by Jensen's inequality we get 
\begin{equation}\label{eq:1>g}
	1=\E g_t(Y)\ge g_t(2)=2^t-1-t\ln 2=\psi_{11}(u),
\end{equation}
where $u:=t\ln2$. Since the function $\psi_{11}$ is increasing, it follows that 
\begin{equation*}
	u\le u_*:=\psi_{11}^{-1}(1)=-(W_{-1}(-e^{-2})+2). 
\end{equation*}
Recalling that $\|X\|_{\psi_{11}}=x=1/t=(\ln2)/u$ and $\|X\|_{\psi_1}=1$, we see that inequality \eqref{eq:1<11} is equivalent to the inequality $u\le u_*$. 

%obtain inequality \eqref{eq:1<11}. 

If now $|X|$ is a constant, then wlog 
$X=\ln2$ a.s., so that $\|X\|_{\psi_1}=1$ and also the inequality in \eqref{eq:1>g} turns into the equality; hence, inequality \eqref{eq:1<11} turns into the equality. 

This completes the proof of Proposition~\ref{prop:gain}. 
\end{proof}

%IP07-19-22
\begin{remark}\label{rem:11 vs 1}
The gain provided by using the norm $\|\cdot\|_{\psi_{11}}$ instead of $\|\cdot\|_{\psi_1}$ can be quite substantial. For instance, if $X_1,\dots,X_n$ are independent r.v.'s with the same centered uniform distribution, then $B_n^2/B_{n,1}^2=2.215\dots$. Since $B_n^2$ and $B_{n,1}^2$ appear as factors in the exponents, the $\psi_1$-based bound 
$\exp-\frac{x^2}{4B_2^2}$ on $\P(S_n\ge x)$ and the corresponding improved, $\psi_{1,1}$-based bound $\exp-\frac{x^2}{4B_{n,1}^2}$ (as in \eqref{eq:th2-1})
%with $B_n^2$ and $B_{n,1}^2$ 
can be as different as $0.1$ and $0.1^{2.215\dots}=0.006091\dots$, respectively. 

%IP07-19-22 
%The 
It follows from Proposition~\ref{prop:gain} that the greatest possible value of the improvement ratio $B_n^2/B_{n,1}^2%IP07-19-22
=\sum_{i=1}^n\|X_i\|_{\psi_1}^2/\sum_{i=1}^n\|X_i\|_{\psi_{11}}^2$ over all independent zero-mean r.v.'s $X_1,\dots,X_n$ %IP07-19-22
with $\|X_i\|_{\psi_1}\in(0,\infty)$ for all $i$ is $C^2=2.734\dots$, attained when $\P(X_i=1)=\P(X_i=-1)=1/2$ for all $i$ (where $C$ is as in \eqref{eq:C}), and then the value  $0.1^{2.215\dots}=0.006091\dots$ in the above paragraph is replaced by the yet smaller value $0.1^{2.734\dots}=0.001843\dots$. 
\end{remark}

%IP07-19-22 
The function $\psi_{11}$ appeared more or less explicitly in a number of papers, including \cite[formula~(2a)]{bennett}, \cite[formula~(1)]{pin-sakh}, \cite[Theorem~3.1]{pin94}, 
\cite[proof of Theorem~2.10]{BLM}. However, $\psi_{11}$ does not seem to have been
considered specifically as an Orlicz function %IP07-19-22
in the context of sums of sub-exponential r.v.'s. 

%\medskip\hrule\medskip

\section{Yet another improvement: using a smaller threshold level in the definition of the Orlicz norm}\label{ep}

There is no compelling reason to compare $\E f(|X|/x)$ with the specific constant $1$ in the definition \eqref{eq:orlicz} of the Orlicz norm $\|X\|_f$; instead of $1$, one can use there any other positive real constant, which may be referred to as the threshold level. Moreover, we shall see that in typical applications it is advantageous to let the threshold level be small. 

Accordingly, take any real number $\vp>0$, which may be thought of as small. Generalize  definition \eqref{eq:orlicz} as follows:  
\begin{equation}\label{eq:orlicz-ep}
	\|X\|_{f;\vp}:=\inf\{x>0\colon\E f(|X|/x)\le\vp\}.    
\end{equation}
So, $\vp$ is the threshold level of the Orlicz norm $\|\cdot\|_{f;\vp}$, which is still a norm. Obviously, 
$\|X\|_{f;1}=\|X\|_f$ and $\|X\|_{f;\ep}=\|X\|_{f/\vp}$. 
Moreover, for any threshold level $\vp>0$, a r.v.\ $X$ is sub-exponential iff $\|X\|_{\psi_1;\vp}<\infty$ iff $\|X\|_{\psi_{11};\vp}<\infty$. 

Theorems~\ref{th:1} and \ref{th:2} get now respectively generalized as well (with almost the same proofs%, using the obvious identity $\|X\|_{f;\ep}=\|X\|_{f/\vp}$
): 

\begin{theorem1.1ep*}\label{th:1-ep} 
If $\E X=0$, then 
\begin{equation}\label{eq:th1-ep}
	\E e^{hX}\le1+h^2\vp\|X\|_{\psi_{11};\vp}^2
	\le\exp\{h^2\vp\|X\|_{\psi_{11};\vp}^2\}\le\exp\{h^2\vp\|X\|_{\psi_1;\vp}^2\}
\end{equation}
for any real $h$ such that 
\begin{equation*}%IP07-19-22
	|h|\,\|X\|_{\psi_{11};\vp} \le1 
\end{equation*}
and hence for any real $h$ such that 
\begin{equation*}
	|h|\,\|X\|_{\psi_1;\vp}\le1. 
\end{equation*}
\end{theorem1.1ep*} 

\begin{theorem1.2ep*}%\label{th:2-ep} 
Let $S_n:=X_1+\cdots+X_n$, where $X_1,\dots,X_n$ are independent zero-mean sub-exponential r.v.'s. Let 
\begin{equation*}
	B_{n,1;\vp}^2:=\vp\sum_{i=1}^n\|X_i\|_{\psi_{11};\vp}^2\quad\text{and}\quad
	M_{n,1;\vp}:=\max_{i=1}^n\|X_i\|_{\psi_{11};\vp}. 
\end{equation*}
To avoid trivialities, suppose that $M_{n,1;\vp}\in(0,\infty)$ or, equivalently, $B_{n,1;\vp}^2\in(0,\infty)$. 
%To avoid trivialities, suppose that $M_{n,1;\vp}\ne0$ or, equivalently, $B_{n,1;\vp}^2\ne0$. 
Then for all real $x\ge0$ 
\begin{align}
	\P(S_n\ge x)
	&\le\left\{
\begin{alignedat}{2}
	&\exp-\frac{x^2}{4B_{n,1;\vp}^2} &&\text{\quad if\quad }x\le\frac{2B_{n,1;\vp}^2}{M_{n,1;\vp}}, \\ 
	&\exp-\Big(%IP07-19-22\frac x{M_{n,1};\vp}
	\frac x{M_{n,1;\vp}}-\frac{B_{n,1;\vp}^2}{M_{n,1;\vp}^2}\Big) 
	 &&\text{\quad if\quad }x\ge\frac{2B_{n,1;\vp}^2}{M_{n,1;\vp}}
\end{alignedat}
	\right. \label{eq:th2-ep1} \\ 
	&\le\exp\Big(-\min\Big(\frac{x^2}{4B_{n,1;\vp}^2},
	\frac x{2M_{n,1;\vp}}\Big)\Big). \label{eq:th2-ep2} 
\end{align}	  
\end{theorem1.2ep*}

Of course, for any r.v.\ $X$ we have the generalization $\|X\|_{\psi_{11};\vp}\le\|X\|_{\psi_1;\vp}$ of inequality \eqref{eq:11<1}, and therefore we may replace $B_{n,1;\vp}^2$ and $M_{n,1;\vp}$ by their larger counterparts 
\begin{equation}\label{eq:n,vp}
B_{n;\vp}^2:=\vp\sum_{i=1}^n\|X_i\|_{\psi_1;\vp}^2\quad\text{and}\quad
	M_{n;\vp}:=\max_{i=1}^n\|X_i\|_{\psi_1;\vp},  
\end{equation}
defined using $\psi_1$ instead of $\psi_{11}$. 

%IP07-19-22 
In \cite[Theorem 1.2.7]{chafai-etal}, another upper bound on $\P(S_n\ge x)$ for $x\ge0$ was obtained, which can be rewritten in our notation as 
\begin{equation*}
	\exp\Big(-\min\Big(\frac{x^2}{4c_2B_{n;\vp}^2},
	\frac x{2c_1M_{n;\vp}}\Big)\Big)
\end{equation*}
for $\vp=e-1$, 
where $c_1:=2e-1>1$ and $c_2:=\frac{2e-1}{2e-2}>1$. Clearly, for all real $x>0$, this bound from \cite{chafai-etal} is worse (that is, greater) than 
the upper bound 
\begin{equation*}
	\exp\Big(-\min\Big(\frac{x^2}{4B_{n;\vp}^2},
	\frac x{2M_{n;\vp}}\Big)\Big), 
\end{equation*}
which latter is the weakened version of the upper bound in \eqref{eq:th2-ep2}, with $\psi_1$ in place of $\psi_{11}$. 

%IP07-19-22
Actually, as noted in the beginning of this section, with the Orlicz function $\psi_{11}$, it is usually beneficial to let the threshold level $\vp$ be small, rather than greater than $1$.  
Moreover, if $\vp$ is small, then the contrast between $\|X\|_{\psi_{11};\vp}$ and $\|X\|_{\psi_1;\vp}$ is much starker than that allowed (for $\vp=1$) by Proposition~\ref{prop:gain}:  

\begin{proposition}\label{prop:gain-ep}
Take any sub-exponential r.v.'s $X$ such that $0<\|X\|_{\psi_1}<\infty$. Let $\vp\downarrow0$. Then  
\begin{equation}\label{eq:1-ep}
	\|X\|_{\psi_1;\vp}\sim\frac{\E|X|}\vp, 
\end{equation}
\begin{equation}\label{eq:11-ep}
	\|X\|_{\psi_{11};\vp}\sim\sqrt{\frac{\E X^2}{2\vp}}, 
\end{equation}
so that $\|X\|_{\psi_{11};\vp}=o(\|X\|_{\psi_1;\vp})$. 
\end{proposition}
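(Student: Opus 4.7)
The plan is to characterize $x_\vp:=\|X\|_{\psi_1;\vp}$ and $y_\vp:=\|X\|_{\psi_{11};\vp}$ as the unique solutions, for all sufficiently small $\vp>0$, of $\E\psi_1(|X|/x_\vp)=\vp$ and $\E\psi_{11}(|X|/y_\vp)=\vp$, and then read off the asymptotics from the expansions $\psi_1(u)=u+O(u^2)$ and $\psi_{11}(u)=u^2/2+O(u^3)$ near $u=0$. Because $\|X\|_{\psi_1}<\infty$, Property 3 of Proposition~A gives $\E e^{|X|/x_0}<\infty$ for every $x_0>\|X\|_{\psi_1}$, whence every absolute moment of $X$ is finite; in particular $\E|X|$ and $\E X^2$ are finite, and both are positive since $\|X\|_{\psi_1}>0$ forces $X\ne 0$ on a set of positive probability. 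Dominated convergence shows that $\E\psi_1(|X|/x)$ and $\E\psi_{11}(|X|/x)$ are continuous and strictly decreasing in $x\in(\|X\|_{\psi_1},\infty)$ and tend to $0$ as $x\to\infty$, so for small $\vp$ the equations above indeed have unique solutions $x_\vp,y_\vp\in(0,\infty)$ (the minimum in the $\vp$-analogue of \eqref{eq:=min} being attained by continuity), and $x_\vp,y_\vp\to\infty$.

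The heart of the argument is then the pair of asymptotic identities
\begin{equation*}
 \lim_{x\to\infty} x\,\E\psi_1(|X|/x)=\E|X|,\qquad \lim_{x\to\infty} x^2\,\E\psi_{11}(|X|/x)=\tfrac12\E X^2.
\end{equation*}
Granted these, substituting $x=x_\vp$ and $x=y_\vp$ (both of which tend to $\infty$) gives $\vp\,x_\vp=x_\vp\E\psi_1(|X|/x_\vp)\to\E|X|$ and $\vp\,y_\vp^2=y_\vp^2\E\psi_{11}(|X|/y_\vp)\to\tfrac12\E X^2$, which are \eqref{eq:1-ep} and \eqref{eq:11-ep} exactly. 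The final assertion $\|X\|_{\psi_{11};\vp}=o(\|X\|_{\psi_1;\vp})$ then follows by dividing \eqref{eq:11-ep} by \eqref{eq:1-ep}.

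The only nontrivial step is justifying the two limits by dominated convergence. Pointwise convergence of the integrands to $|X|$ and $X^2/2$ is immediate from the Taylor expansion of $e^u$ at $0$. An integrable dominator comes from the simple but crucial observation that both $u\mapsto(e^u-1)/u$ and $u\mapsto(e^u-1-u)/u^2$ are nondecreasing on $(0,\infty)$ (their power series have nonnegative coefficients): for any $x_0>\|X\|_{\psi_1}$ and every $x\ge x_0$,
\begin{equation*}
 x(e^{|X|/x}-1)\le x_0(e^{|X|/x_0}-1),\qquad x^2(e^{|X|/x}-1-|X|/x)\le x_0^2(e^{|X|/x_0}-1-|X|/x_0),
\end{equation*}
and both majorants are integrable because $\E e^{|X|/x_0}<\infty$. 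This monotonicity-based choice of dominator is the only place where a little care is needed; everything else reduces to bookkeeping.
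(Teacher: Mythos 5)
Your proof is correct and takes essentially the same route as the paper: both reduce the asymptotics to a dominated-convergence computation of $\lim_{x\to\infty} x\,\E\psi_1(|X|/x)=\E|X|$ and $\lim_{x\to\infty} x^2\,\E\psi_{11}(|X|/x)=\tfrac12\E X^2$, which is exactly what the paper does via the rescaled quantities $R_{1;\vp}$ and $R_{11;\vp}$ with $z$ fixed and $\vp\downarrow0$ (the same limits along $x=z/\vp$, resp.\ $x=z/\sqrt\vp$). Your organizational differences --- characterizing the norms as solutions of $\E\psi_1(|X|/x_\vp)=\vp$ and $\E\psi_{11}(|X|/y_\vp)=\vp$, and using the monotone dominators $x_0(e^{|X|/x_0}-1)$, $x_0^2(e^{|X|/x_0}-1-|X|/x_0)$ in place of the paper's $\frac{|X|}{z}e^{\vp|X|/z}$-type majorants --- are sound and, if anything, make the passage from the expectation limits to the norm asymptotics more explicit.
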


\begin{proof}[Proof of Proposition~\ref{prop:gain-ep}] 
Note that 
\begin{equation}\label{eq:1,ep,=}
	\|X\|_{\psi_1;\vp}=\frac1\vp\,\inf\{z>0\colon\E R_{1;\vp}\le1\},
\end{equation}
where 
\begin{equation*}
	R_{1;\vp}:=\frac{e^{\vp|X|/z}-1}\ep\to\frac{|X|}z;  
\end{equation*}
Here and in what follows in this proof, $z$ is an arbitrary positive real number. 
Moreover, 
\begin{equation*}
	0\le R_{1;\vp}\le\frac{|X|}z\,e^{\vp|X|/z}.
\end{equation*}
Therefore and because the r.v.\ $X$ is sub-exponential, one can use use the dominated convergence theorem to conclude that $\E R_{1;\vp}\to\frac{\E|X|}z$. Now \eqref{eq:1-ep} follows from \eqref{eq:1,ep,=}. 

Similarly, 
\begin{equation}\label{eq:11,ep,=}
	\|X\|_{\psi_{11;\vp}}=\frac1{\sqrt\vp}\,\inf\{z>0\colon\E R_{11;\vp}\le1\},
\end{equation}
where 
\begin{equation*}
	R_{11;\vp}:=\frac{e^{\sqrt\vp\,|X|/z}-1-\sqrt\vp\,|X|/z}\ep\to\frac{X^2}{2z^2}. 
\end{equation*}
Moreover, 
\begin{equation*}
	0\le R_{11;\vp}\le\frac{X^2}{2z^2}\,e^{\sqrt\vp\,|X|/z}.
\end{equation*}	
So, by the dominated convergence theorem, $\E R_{11;\vp}\to\frac{\E X^2}{2z^2}$. Now \eqref{eq:11-ep} follows from \eqref{eq:11,ep,=}. 
\end{proof}

Almost immediately from Theorem 2.$\boldsymbol{\vp}$ %Theorem~\ref{th:2-ep} 
and Proposition~\ref{prop:gain-ep} we get 
\begin{corollary}\label{cor:1}
Let $X_1,X_2,\dots$ be independent identically distributed zero-mean sub-exponential r.v.'s such that $0<\|X_1\|_{\psi_1}<\infty$. For each natural $n$, let  $S_n:=X_1+\cdots+X_n$. Then
\begin{equation}\label{eq:cor1}
\begin{aligned}
	\P(S_n\ge x)&\le\exp-\frac{x^2}{(2+o(1))\,n\E X_1^2}
\end{aligned}	 
\end{equation}
whenever $n$ and $x$ vary simultaneously so that 
$0\le x/n\to0$. (Here we may, but do not have to, insist that $n\to\infty$.)
\end{corollary}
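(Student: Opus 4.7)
The plan is to apply Theorem~2.$\vp$ with a threshold level $\vp=\vp_n$ depending on $(n,x)$, chosen small enough that Proposition~\ref{prop:gain-ep} can replace $\vp_n\|X_1\|_{\psi_{11};\vp_n}^2$ by its limit $\E X_1^2/2$ in the exponent, yet large enough that the quadratic-exponent (Gaussian) top branch of \eqref{eq:th2-ep1} is the one that applies. After dispatching the trivial case $x=0$ (where both sides of \eqref{eq:cor1} equal~$1$), I may assume $x>0$.

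I would take $\vp_n:=x/n$, so that $\vp_n\to 0$ by hypothesis. Using the i.i.d.\ assumption together with Proposition~\ref{prop:gain-ep},
\[
B_{n,1;\vp_n}^2 = \vp_n\, n\,\|X_1\|_{\psi_{11};\vp_n}^2 = (1+o(1))\,\frac{n\,\E X_1^2}{2},\qquad M_{n,1;\vp_n} = \|X_1\|_{\psi_{11};\vp_n} = (1+o(1))\sqrt{\frac{\E X_1^2}{2\vp_n}}.
\]
The top-branch condition $x\le 2B_{n,1;\vp_n}^2/M_{n,1;\vp_n}$ becomes, after substituting these asymptotics and $\vp_n=x/n$, the condition $x\le (1+o(1))\sqrt{2nx\,\E X_1^2}$, i.e.\ $x/n\le (2+o(1))\,\E X_1^2$. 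Since $x/n\to 0$ and $\E X_1^2>0$, this is satisfied whenever $x/n$ is sufficiently small.

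Applying the top branch of \eqref{eq:th2-ep1} with this choice of $\vp_n$ then yields
\[
\P(S_n\ge x)\le\exp-\frac{x^2}{4B_{n,1;\vp_n}^2}=\exp-\frac{x^2}{(2+o(1))\,n\,\E X_1^2},
\]
which is \eqref{eq:cor1}. The only real subtlety is the tuning of $\vp_n$: one must simultaneously arrange $\vp_n\to 0$ (so that Proposition~\ref{prop:gain-ep} is applicable) and $\vp_n$ bounded below by a constant multiple of $x^2/n^2$ (so that we stay in the Gaussian regime of \eqref{eq:th2-ep1}). Because $x/n\to 0$, the choice $\vp_n=x/n$ is simple and delivers both conditions at once.
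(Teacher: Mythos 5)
Your proof is correct and follows essentially the same route as the paper: apply Theorem~2.$\vp$ with a vanishing threshold level tied to $(n,x)$, use Proposition~\ref{prop:gain-ep} to identify $B_{n,1;\vp}^2\sim n\E X_1^2/2$ and $M_{n,1;\vp}\sim\sqrt{\E X_1^2/(2\vp)}$, check that the quadratic branch of \eqref{eq:th2-ep1} applies, and conclude. The only difference is the specific tuning (you take $\vp=x/n$, the paper takes $\vp=(x/n)^2/\E X_1^2$), and both choices satisfy the two competing requirements you correctly identify.
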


%IP07-19-22 
Indeed, by Proposition~\ref{prop:gain-ep}, in the conditions of Corollary~\ref{cor:1} we have $B_{n,1;\vp}^2=n\vp\|X_1\|_{\psi_{11};\vp}^2\sim n\E X_1^2/2$ and 
$M_{n,1;\vp}=\|X_1\|_{\psi_{11};\vp}\sim\sqrt{\E X_1^2/(2\vp)}$ as $\vp\to0+$, so that 
\begin{equation}
	\frac{2B_{n,1;\vp}^2}{M_{n,1;\vp}}\sim n\sqrt{\E X_1^2}\sqrt{2\vp}. 
\end{equation}
Letting now $\vp=(x/n)^2/\E X_1^2$, by the condition $x/n\to0$ we will have $\vp\to0+$, and the condition $x\le\frac{2B_{n,1;\vp}^2}{M_{n,1;\vp}}$ in \eqref{eq:th2-ep1} will be satisfied eventually, that is, for all small enough values of $x/n\ge0$. 
So, by Theorem 2.$\boldsymbol{\vp}$, we will have 
\begin{equation}
	\P(S_n\ge x)
	\le\exp-\frac{x^2}{4B_{n,1;\vp}^2}=\exp-\frac{x^2}{(2+o(1))\,n\E X_1^2}, 
\end{equation}
which proves \eqref{eq:cor1}. 

Corollary~\ref{cor:1}, % is, of course, well known -- . 
with the optimal constant $2$ in the denominator in \eqref{eq:cor1}, cannot apparently be obtained without using the generalized Orlicz norm $\|\cdot\|_{f;\vp}$ with $\vp\downarrow0$ or without using the Orlicz function $\psi_{11}$ instead of $\psi_1$. 

%IP07-19-22 
E.g., in the case when $\P(X_i=1)=\P(X_i=-1)=1/2$ for all $i$ we can say the following:  
\begin{enumerate}[(i)]
		\item Using Theorem 2.$\boldsymbol{\vp}$ with $\vp=1$ (that is, using Theorem~\ref{th:2}), we would only get the constant $4\|X_1\|_{\psi_{11;1}}^2=4\|X_1\|_{\psi_{11}}^2=\frac{4}{\left(W_{-1}\left(-e^{-2}\right)+2\right){}^2}=3.044\dots$ in place of $2+o(1)$ in the denominator in \eqref{eq:cor1}; here, again, $W_{-1}$ is the $(-1)$th branch of the Lambert $W$ function.
	\item Note that here $\|X_1\|_{\psi_1;\vp}=g(\vp):=1/\ln(1+\vp)\sim1/\vp$ as $\vp\downarrow0$. (By Proposition~\ref{prop:gain-ep}, the asymptotic $\|X\|_{\psi_1;\vp}\sim\E|X|/\vp$ actually holds in general, for any any sub-exponential r.v.\ $X$ such that $0<\|X\|_{\psi_1}<\infty$.) So, $B_{n;\vp}^2=n\vp g(\vp)^2\to\infty$ as $\vp\downarrow0$, where $B_{n;\vp}^2$ is as defined in \eqref{eq:n,vp}. Thus, if we weaken Theorem~2.$\boldsymbol{\vp}$ by replacing there  $\psi_{11}$ by $\psi_1$, then instead of $2+o(1)$ in \eqref{eq:cor1} we will get a quantity going to $\infty$ as $\vp\downarrow0$. We see that letting $\vp\downarrow0$ is not at all helpful when $\psi_1$ is used, rather than $\psi_{11}$. 
The optimal choice of $\vp$ in this weakened, $\psi_1$ version of Theorem 2.$\boldsymbol{\vp}$ is the minimizer $\vp_*:=
e^{W\left(-2e^{-2}\right)+2}-1=3.921\dots$ of $\vp g(\vp)^2$ in $\vp>0$,  
%It is not hard to see that the optimal choice of $\vp$ in this weakened, $\psi_1$ version of Theorem 2.$\boldsymbol{\vp}$ is actually $\vp_*:=
%e^{W\left(-2e^{-2}\right)+2}-1=3.921\dots$, 
and with this choice of $\vp$ 
%	 with $B_{n;\vp}^2$ and $M_{n;\vp}$ in place of $B_{n,1;\vp}^2$ and $M_{n,1;\vp}$, respectively. Then  
%	
%	$\psi_1$ in place of $\psi_{11}$ and then 
%choosing $\vp>0$ optimally, 
we would only get the constant $4\vp_* g(\vp_*)^2=6.176\dots$ in place of $2+o(1)$ in the denominator in \eqref{eq:cor1}. 
(Here $W$ is the $0$th branch of the Lambert $W$ function, so that for all real $z$ and $w$ one has $w=W(z)\in(-1,\infty)\iff z=we^w\in(-e^{-1},\infty)$.) 
\end{enumerate}

%\begin{equation}\label{eq:n,vp}
%B_{n;\vp}^2:=\vp\sum_{i=1}^n\|X_i\|_{\psi_1;\vp}^2\quad\text{and}\quad
%	M_{n;\vp}:=\max_{i=1}^n\|X_i\|_{\psi_1;\vp},  
%\end{equation}

However, exponential bounds %IP07-19-22 with a similar optimality property 
of forms similar to that of the bound in \eqref{eq:cor1}, with $2+o(1)$ in the denominator, 
were previously obtained in somewhat different settings -- see \cite{MR821760} and references there. 

\section{Illustration of improvements}\label{pic}
Graphs of the upper bound on $\P(S_n\ge x)$ in \eqref{eq:th2-ep1} for independent r.v.'s $X_1,\dots,X_n$ with $\P(X_i=1)=\P(X_i=-1)=1/2$ are presented in Figure~\ref{fig:pic} 
%
%Figure~\ref{fig:pic} shows graphs, for independent $X_i$'s with $\P(X_i=1)=\P(X_i=-1)=1/2$, of the upper bound on $\P(S_n\ge x)$ in \eqref{eq:th2-ep1} 
for $x\in[0,3\sqrt n\,]$ with $\vp=0.1$ (solid black), $\vp=0.3$ (solid gray), $\vp=1$ (solid light gray) -- for $n=10$ (left) and $n=100$ (right). Also shown in Figure~\ref{fig:pic} are the corresponding (dashed) graphs of the bound obtained from the bound in \eqref{eq:th2-ep1} by replacing there $\|X_i\|_{\psi_{11};\vp}$ with $\|X_i\|_{\psi_1;\vp}$.

\begin{figure}[htbp] %C:\Users\ipinelis\Documents\pCloudSync\mtu_pCloud_02-02-17\orlicz-sums\Untitled-12.nb 
	\centering
		\includegraphics[width=1.00\textwidth]{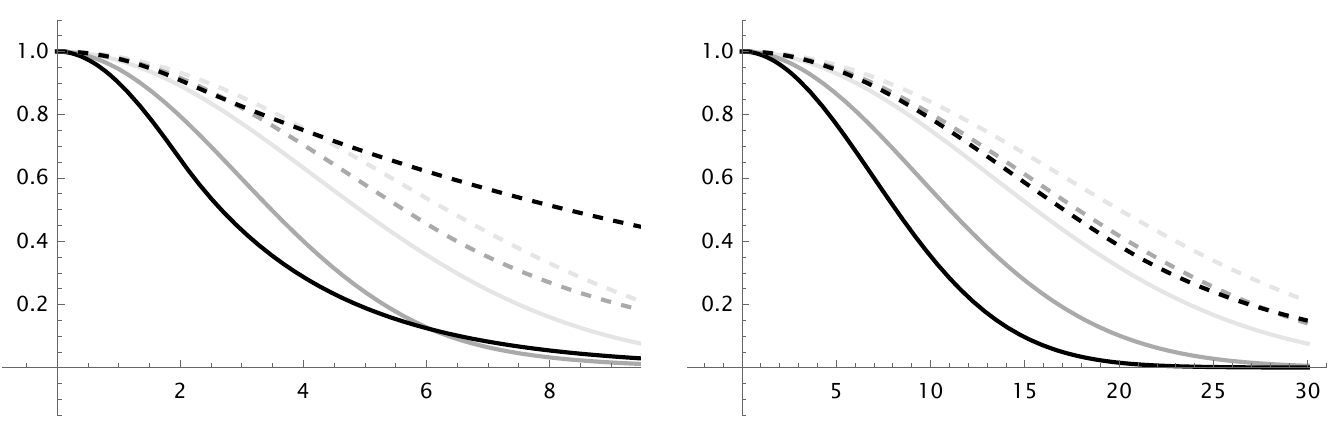}
	\caption{Graphs of the various bounds. 
	} 
	\label{fig:pic}
\end{figure}

These graphs %in Figure~\ref{fig:pic} 
strongly suggest that (i) replacing the Orlicz function $\psi_1$ by $\psi_{11}$ results in quite substantial improvement of the upper bounds on $\P(S_n\ge x)$, especially in the large-deviation zone, and (ii) replacing the threshold level $1$ in the definition \eqref{eq:orlicz} of the Orlicz norm by a smaller level $\vp>0$ is also generally beneficial. 

\section{Conclusion}\label{conclution}
%{\bf Conclusion:}

In this note, the following has been done:  
\begin{itemize}
	\item The derivation of Bernstein-type upper bounds on the tail probabilities for sums of independent zero-mean sub-exponential r.v.'s $X_i$ has been streamlined, by avoiding intermediate steps of bounding tails and absolute moments of the individual summands $X_i$.  
	\item %IP07-19-22 
	The resulting bounds do not contain any non-explicit constants, in contrast with some previously known bounds, and the new bounds improve corresponding existing bounds with explicit constants -- uniformly, for all values of the relevant parameters. 
%	More importantly, the resulting bounds do not contain any non-explicit constants, %IP07-19-22 
%apparently in contrast with previously known bounds %IP07-19-22 
%for sums of independent sub-exponential r.v.'s in general. 	
	\item %More importantly, the 
Moreover, the Orlicz function $0\le u\mapsto\psi_1(u)=e^u-1$, commonly used so far for sub-exponential r.v.'s, has been replaced by the Orlicz function $0\le u\mapsto\psi_{11}(u)=e^u-1-u$, %with a better behavior for small $u$, 
which led to substantially improved bounds, presented in Theorem~\ref{th:2}. This improvement effect can be explained as follows: In contrast with $\psi_1$, the Orlicz function $\psi_{11}$ is asymptotically quadratic near $0$, which allows us to bound effectively, in Lemma~\ref{lem:1}, the asymptotically quadratic (for $x$ near $0$) expression $e^{\la x}-1-\la x$ by means of the asymptotically quadratic (for $x$ near $0$) expression $\psi_{11}(x)=e^{|x|}-1-|x|$.  
	\item A further substantial improvement was obtained in Theorem 2.${\vp}$  
by using smaller threshold levels $\vp$ -- %in the definition \eqref{eq:orlicz} of the Orlicz norm, 
instead of the commonly used threshold level $1$, as in the definition \eqref{eq:orlicz} of the Orlicz norm. 
Using smaller values of $\vp$ helps because the mentioned asymptotically quadratic behavior of $\psi_{11}(|x|)$ for $x$ near $0$ is then captured more effectively, since the smallest values of any Orlicz function are taken in a neighborhhod of $0$. 
Thus obtained bounds have a certain, previously unavailable optimality property, exhibited in Corollary~\ref{cor:1}. 
\end{itemize}

%-- In Lemma~\ref{lem:1}, the locally quadratic (near $0$) $e^{\la x}-1-\la x$ is bounded by means of locally quadratic $\psi_{11}(|x|)$
%
%-- the locally quadratic behavior of $\psi_{11}(|x|)$ near $0$ is captured more effectively when $\vp$ is small 
%
%-- 

Note finally that having explicit and accurate bounds is of obvious importance if the theory is to ever be applied to real data. % ???!!! some authoritative references here 

%\medskip
%\hrule
%\medskip

\bibliographystyle{amsplain}

\bibliography{C:/Users/ipinelis/Documents/pCloudSync/mtu_pCloud_02-02-17/bib_files/citations04-02-21}

%%%\bibliographystyle{abbrv}
%%%%%%%\bibliographystyle{ims}
%%%%%%%\bibliography{are.citations}
%%%%%%%\bibliography{citat}
%%%%%%
%%%%%%%\bibliography{citations}
%%%%%%
%%%\bibliography{C:/Users/Iosif/Dropbox/mtu/bib_files/citations12.13.12}
%%%%\bibliography{C:/Users/ipinelis/Dropbox/mtu/bib_files/citations12.13.12}

\end{document}